\newtheorem{theorem}{Theorem}[section]
\newtheorem{corollary}[theorem]{Corollary}
\newtheorem{lemma}[theorem]{Lemma}
\theoremstyle{definition}
\newtheorem{definition}[theorem]{Definition}
\newtheorem{example}[theorem]{Example}
\theoremstyle{remark}
\newtheorem{remark}[theorem]{Remark}
\numberwithin{equation}{section}
\newsavebox{\notationlistcontent}
\newcommand{\scr}{\mathcal}
\DeclareMathOperator{\Symb}{Symb}
\DeclareMathOperator{\Vol}{Vol}
\newcommand{\st}{\; | \;}
\newcommand{\defeq}{:=}
\newcommand{\CC}{\mathbb{C}}
\newcommand{\NN}{\mathbb{N}}
\newcommand{\RR}{\mathbb{R}}
\newcommand{\ip}[1]{\langle #1 \rangle}
\newcommand{\into}{\hookrightarrow}
\newcommand{\pddiff}[3][.]{\ifthenelse{\equal{#1}{.}}
        {\frac{\partial^2 #2}{\partial #3^2}}
        {\frac{\partial^2 #1}{\partial #2 \partial #3}}}
\newcommand{\Lie}{\mathsf}
\newcommand{\lie}{\mathfrak}
\newcommand{\Univ}[2][.]{\ifthenelse{\equal{#1}{.}}{\mathcal{U}}{\mathcal{U}^{(#1)}}(\mathfrak{#2})}
\DeclareMathOperator{\SL}{SL}
\DeclareMathOperator{\SU}{SU}
\DeclareMathOperator{\Sp}{Sp}
\newcommand{\supp}[1][.]{%
  \ifthenelse{\equal{#1}{.}}
    {}
    {#1\text{-}}
  \mathrm{supp}}
\newcommand{\PsiDOc}[1]{\Psi^{#1}_c}
\newcommand{\PsiDO}[1]{\Psi^{#1}}
\newcommand{\Cs}{\overline{\Psi^{-\infty}_c}}
\newcommand{\cosphere}[1]{S#1}
\newcommand{\fibration}{\tau}
\newcommand{\Op}[2][]{\mathrm{Op}_{\Lie{#1}}#2}
\newcommand{\dHaar}[1]{d_{\Lie{#1}}}
\newcommand{\Haar}[1]{\mu_{\Lie{#1}}}
\newcommand{\modular}[1]{\Delta_{\Lie{#1}}}
\newcommand{\chart}{\varphi}
\newcommand{\Grpd}{\scr{G}}
\newcommand{\crit}{C}
\newcommand{\Roots}{\Sigma}
\newcommand{\Simpleroots}{\Pi}
\begin{document}

\title{Foliation $C^*$-algebras on multiply fibred manifolds}

\author{Robert Yuncken}
\email{yuncken@math.univ-bpclermont.fr}

\subjclass[2000]{58J40 (primary); 22E30, 43A85 (secondary)}

\begin{abstract}
Motivated by index theory for semisimple groups, we study the relationship between the foliation $C^*$-algebras on manifolds admitting multiple fibrations.  Let $\scr{F}_1,\ldots,\scr{F}_r$ be a collection of smooth foliations of a manifold $\scr{X}$.  We impose a condition of local homegeneity on these foliations which ensures that they generate a foliation $\scr{F}$ under Lie bracket of tangential vector fields.  We then show that the product of longitudinal smoothing operators $\Psi_c^{-\infty}(\scr{F}_1) \cdots \Psi_c^{-\infty}(\scr{F}_r)$ belongs to the $C^*$-closure of $\Psi_c^{-\infty}(\scr{F})$.  An application to noncommutative harmonic analysis on compact Lie groups is presented.
\end{abstract}

\maketitle


\small \noindent \textbf{2000 MSC Codes:} 58J40 (primary), 22E30, 43A85 (secondary).\\
\noindent \textbf{Keywords:}  pseudodifferential operators; foliation algebras; Lie groups; flag varieties.

\normalsize

\vspace{1ex}

\section{Introduction}
\label{sec:introduction}

A major thread throughout index theory is the study of longitudinal pseudodifferential operators---operators which (pseudo)differentiate along the leaves of a foliation.  This stream of ideas was already begun in the foundational papers of Atiyah and Singer \cite{Atiyah-Singer4} (for fibred manifolds), and generalized radically by Connes \cite{Connes:survey} (for foliations).  However, to date all of the literature has concentrated on manifolds with a single foliation.  The purpose of this paper is to begin to study foliation algebras on manifolds with multiple foliations.

It is important to understand the motivation here, which comes from very specific examples in representation theory.  Recall that for semisimple Lie groups, much of the representation theory centres on the generalized principal series representations, which act on section spaces of line bundles over the flag manifold $\scr{X}\defeq\Lie{G/B}$ of the group $\Lie{G}$.  Intertwining operators between these representations occur naturally as pseudodifferential operators along the various fibrations of $\scr{X}$.  

It is the analysis of these operators that we are really interested in.  Our decision to use the language of foliations rather than fibrations in this paper was made only to simplify the statements and proofs of the results, not to anticipate generalizations away from these fundamental examples.\footnote{Having said that, one could certainly conceive of applications to parabolic geometries, following \cite{CSS}.}

In \cite{Yuncken:BGG}, we demonstrated how the analysis of these operators can be applied to index theory.  Specifically, we used the BGG-complex of $\SL(3,\CC)$ to construct an explicit model for Kasparov's $\gamma$ element as the image of an element of the equivariant $K$-homology of the flag variety $K_G(\scr{X})$.  This parallels earlier constructions by Kasparov and Julg (\cite{Kasparov:Lorentz, Julg-Kasparov, Julg}) which were used to prove the Baum-Connes conjecture for all discrete subgroups of rank one Lie groups\footnote{The case of $\Sp(n,1)$ was first proven in \cite{Lafforgue:Banach_KK} using slightly different methods.}.  The conjecture remains open in rank greater than one.

Central to the above mentioned construction of $\gamma$ for $\SL(3,\CC)$ was a compactness theorem for products of negative order pseudodifferential operators along the fibrations of $\scr{X}$ (\cite{Yuncken:PsiDOs_on_SLnC}).  The proof used some highly nontrivial computations in noncommutative harmonic analysis for the maximal compact subgroup $\SU(3)$.  While that argument could, in principle, be generalized to all compact semisimple groups, in practice the computations become overwhelming.

In this paper, we change our approach by using noncommutative harmonic analysis in the sense of M.~Taylor \cite{Taylor:microlocal}.  This allows results to be proven in broad generality---in particular for any generalized flag manifold.  Moreover, the proofs become considerably more enlightening.

\bigskip

Let us state the main results.  Let $\scr{X}$ be a smooth manifold equipped with a collection of $r$ smooth foliations $\scr{F}_1,\ldots,\scr{F}_r$.  Let $\PsiDOc{-\infty}(\scr{F}_j)$ denote the set of longitudinally smoothing operators along $\scr{F}_j$ with compact support.  We shall explain exactly what we mean by this in Section \ref{sec:PsiDOs_on_groups}.  With the appropriate definition, these act as bounded operators on $L^2\scr{X}$, and their norm-closure $\Cs(\scr{F}_j)$ is a $C^*$-algebra.  It contains the order $-d$ longitudinal pseudodifferential operators $\PsiDOc{-d}(\scr{F}_j)$ for any $-\infty\leq-d <0$.

We shall control the relative geometry of the foliations by assuming the following local homogeneity condition.  Let $\Lie{G}$ be a connected Lie group, and let $\lie{h}_1,\ldots,\lie{h}_r$ be a collection of Lie subalgebras of its Lie algebra $\lie{g}$.  We let $\scr{H}_j$ denote the foliation of $\Lie{G}$ generated by left translates of $\lie{h}_j$.  

\begin{definition}
\label{def:local_homogeneity}
The family of foliations $\scr{F}_1,\ldots,\scr{F}_r$ will be called {\em locally homogeneous} (with structural data $\lie{h}_1,\ldots,\lie{h}_r \leq \lie{g}$) if there is an atlas of local charts $\phi_\alpha: U_\alpha \to \scr{X}$ with $U_\alpha \subseteq \Lie{G}$ such that $d\phi_\alpha$ maps each $\scr{H}_j$ isomorphically to $\scr{F}_j$ on its domain.
\end{definition}

This condition ensures that the set of vector fields generated by $C^\infty(T\scr{F}_1), \ldots, \linebreak C^\infty(T\scr{F}_r)$ via Lie brackets is itself the space of vector fields tangent to a foliation, which we denote by $\scr{F}$.   In each chart, $T\scr{F}$ is the bundle of left translates of the Lie algebra generated by $\lie{h}_1,\ldots,\lie{h}_r$.

\begin{theorem}
\label{thm:main_theorem}
Let $\scr{F}_1,\ldots,\scr{F}_r$ be a locally homogeneous family of foliations.  With $\scr{F}$ as above,
$$
  \Cs(\scr{F}_1) \cdots \Cs(\scr{F}_r) \subseteq \Cs(\scr{F}).
$$
\end{theorem}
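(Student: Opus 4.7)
The first step is to reduce to a local model on the Lie group $\Lie{G}$.  Via a smooth partition of unity subordinate to the atlas of Definition~\ref{def:local_homogeneity}, combined with the compact support of the operators involved, it suffices to establish the inclusion when each $\scr{F}_j$ is replaced by the restriction to an open $U \subseteq \Lie{G}$ of the foliation $\scr{H}_j$ whose leaves are cosets $g \Lie{H}_j$, with $\Lie{H}_j$ the connected subgroup of $\Lie{G}$ integrating $\lie{h}_j$.  The generated foliation $\scr{F}$ then corresponds to the foliation $\scr{H}$ of $\Lie{G}$ by cosets $g \Lie{H}$, where $\Lie{H}$ has Lie algebra $\lie{h}$, the Lie subalgebra generated by $\lie{h}_1, \ldots, \lie{h}_r$.

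\textbf{Paragraph 2 (Explicit form of the product).}  A dense $*$-subalgebra of $\Cs(\scr{H}_j) = \overline{\PsiDOc{-\infty}(\scr{H}_j)}$ consists of integral operators of the form $(T_j f)(g) = \int_{\Lie{H}_j} k_j(g, t) f(gt)\, dt$, with $k_j \in C_c^\infty(\Lie{G} \times \Lie{H}_j)$; that is, right-convolutions by $\Lie{H}_j$-supported distributions twisted by a $g$-dependent coefficient.  A direct computation of $T_1 \cdots T_r$ yields an iterated integral
\[
  \int_{\Lie{H}_1 \times \cdots \times \Lie{H}_r} k_1(g, t_1)\, k_2(g t_1, t_2) \cdots k_r(g t_1 \cdots t_{r-1}, t_r)\, f(g t_1 \cdots t_r)\, dt_1 \cdots dt_r,
\]
whose Schwartz kernel is supported on pairs $(g, g')$ with $g^{-1} g' \in \Lie{H}_1 \cdots \Lie{H}_r \subseteq \Lie{H}$.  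One is thus tempted to push this kernel forward along the multiplication map $\mu : \Lie{H}_1 \times \cdots \times \Lie{H}_r \to \Lie{H}$ to obtain a smoothing operator along $\scr{H}$.

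\textbf{Paragraph 3 (Main obstacle and proposed route).}  The principal obstacle is that $\mu$ is not a submersion onto a neighbourhood of the identity: its differential there has image $\lie{h}_1 + \cdots + \lie{h}_r$, which is in general a proper subspace of $\lie{h}$, since $\lie{h}$ typically requires iterated Lie brackets to generate.  Hence fibre-integration along $\mu$ cannot produce a kernel smooth along the leaves of $\scr{H}$, so the product does not naively lie in $\PsiDOc{-\infty}(\scr{H})$, and one must truly use the closure in operator norm.  To do this, the plan is to pass to the Fourier side using noncommutative harmonic analysis in the sense of Taylor, as foreshadowed in the introduction: characterise $\Cs(\scr{H})$ in terms of operator-valued symbols on $\hat{\Lie{G}}$, represent each $T_j$ by a symbol which is smoothing along $\hat{\Lie{H}}_j$, and show that the composition of such symbols acquires smoothing along the whole of $\hat{\Lie{H}}$, precisely by exploiting the bracket-generation of $\lie{h}$.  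Establishing this propagation of decay into the transverse bracket directions—and, in parallel, identifying the correct symbolic characterisation of $\Cs(\scr{H})$—is expected to be the technical heart of the argument.
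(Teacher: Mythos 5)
Your Paragraphs 1 and 2 match the paper exactly: the reduction to the local Lie group model via a partition of unity, and the iterated-integral expression for the product, are both precisely the paper's opening moves. Your Paragraph 3 correctly identifies the obstacle — that the multiplication map $\mu: \Lie{H}_1 \times \cdots \times \Lie{H}_r \to \Lie{H}$ has differential at the identity with image only $\lie{h}_1 + \cdots + \lie{h}_r$, a proper subspace of $\lie{h}$ in general, so $\mu$ is not a submersion near the identity and naive fibre-integration fails. This is exactly why the product lands in the closure $\Cs(\scr{H})$ rather than in $\PsiDOc{-\infty}(\scr{H})$.

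However, your proposed resolution diverges fundamentally from the paper's, and, more importantly, you never carry it out — you defer the "technical heart" to a Fourier-side symbolic calculus on $\hat{\Lie{G}}$ that is not developed. The paper's actual argument is entirely geometric and measure-theoretic, and in fact uses no global Fourier analysis at all: the multiplication map $\mu$ is \emph{real-analytic}, so although its critical set $\crit(\mu)$ is nonempty (it contains a neighbourhood of the identity tuple), it has \emph{measure zero} (Sard's theorem plus analyticity, Corollary \ref{cor:analytic_case}). The paper then splits the domain of integration into a small-measure neighbourhood of $\crit(\mu)$ — where the resulting operator has small norm by the elementary estimate of Lemma \ref{lem:norm_estimate} — and its complement, where $\mu$ is a submersion and the fibre-integration you contemplated does produce an element of $\PsiDOc{-\infty}(\scr{H})$ (Lemma \ref{lem:supported_on_regular_set}). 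Taking $\epsilon \to 0$ gives membership in the norm-closure. What you overlooked is precisely that $\mu$ is a submersion \emph{generically}, even though it is degenerate at the identity; the bracket-generation hypothesis is not used to produce "decay in transverse directions" on the Fourier side, but rather to guarantee (via the adjoint trick $(K_1\cdots K_r K_r^*\cdots K_1^*)^n$ and Corollary \ref{cor:multiplier}) that one can reduce to the case where $\Lie{H}_1 \cdots \Lie{H}_r$ has positive measure in $\Lie{H}$. In short: you found the right difficulty, but the paper resolves it with Sard's theorem and an $\epsilon$-splitting near the critical set rather than with a symbol calculus, and your alternative route remains a sketch rather than a proof.
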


Thus, successive smoothing along the directions of $\scr{F}_1,\ldots,\scr{F}_r$ yields an operator which is almost smoothing, not just in the directions spanned by the $T\scr{F}_j$, but in all directions generated from them via Lie brackets.

If $\scr{F}$ is the foliation of $\scr{X}$ by a single leaf then $\Cs(\scr{F}) = \scr{K}(L^2\scr{X})$.  Thus, we have the following important corollary.

\begin{definition}
\label{def:Hormander_condition}
We shall say the foliations $\scr{F}_1,\ldots,\scr{F}_r$ satisfy {\em H\"ormander's condition} if the Lie algebra of all smooth vector fields on $\scr{X}$ is generated by $C^\infty(T\scr{F}_1),\ldots, \linebreak C^\infty(T\scr{F}_r)$. 
\end{definition}

For a locally homogeneous family of foliations, this is equivalent to requiring that $\lie{h}_1,\ldots,\lie{h}_r$ generate $\lie{g}$ as a Lie algbera.

\begin{corollary}
\label{cor:main_corollary}
Let $\scr{F}_1,\ldots, \scr{F}_r$ be a locally homogeneous family of foliations which satisfy H\"ormander's condition.  If $A_j\in\PsiDOc{-1}(\scr{F}_j)$ for each $j$, then the product $A_1\cdots A_r$ is a compact operator.
\end{corollary}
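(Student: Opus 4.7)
The strategy is to read the corollary as a direct consequence of Theorem \ref{thm:main_theorem} together with two elementary observations, so the plan consists essentially of bookkeeping.

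First, I would note the inclusion $\PsiDOc{-1}(\scr{F}_j) \subseteq \Cs(\scr{F}_j)$, which is already recorded in the paragraph preceding the main theorem: the $C^*$-closure of the longitudinal smoothing operators contains all longitudinal pseudodifferential operators of strictly negative order. Consequently each $A_j$ lies in $\Cs(\scr{F}_j)$, and an application of Theorem \ref{thm:main_theorem} gives $A_1 \cdots A_r \in \Cs(\scr{F})$, where $\scr{F}$ is the foliation generated from $\scr{F}_1,\ldots,\scr{F}_r$ by Lie brackets of tangential vector fields.

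Next I would use H\"ormander's condition to identify $\scr{F}$. By Definition \ref{def:Hormander_condition}, $C^\infty(T\scr{F}_1),\ldots,C^\infty(T\scr{F}_r)$ Lie-generate all smooth vector fields on $\scr{X}$; equivalently, under the local homogeneity hypothesis, $\lie{h}_1,\ldots,\lie{h}_r$ generate $\lie{g}$ as a Lie algebra. Either formulation forces $T\scr{F} = T\scr{X}$, so $\scr{F}$ is the foliation of $\scr{X}$ by its single leaf $\scr{X}$ itself. Invoking the remark made just before the statement of the corollary, $\Cs(\scr{F}) = \sK(L^2\scr{X})$, and hence $A_1 \cdots A_r$ is compact.

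Since every ingredient is either quoted verbatim from the excerpt or is the content of the main theorem, there is no genuine obstacle here; the whole difficulty of the corollary is absorbed into the proof of Theorem \ref{thm:main_theorem}. The only mild care needed is to make sure that the operators $A_j$ really are \emph{bounded} elements of $\Cs(\scr{F}_j)$ (not merely formal pseudodifferential symbols), but this is exactly the content of the containment $\PsiDOc{-d}(\scr{F}_j) \subseteq \Cs(\scr{F}_j)$ for $0 < d \leq \infty$ recalled above.
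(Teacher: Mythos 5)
Your proposal is correct and follows exactly the route the paper intends: the containment $\PsiDOc{-1}(\scr{F}_j)\subseteq\Cs(\scr{F}_j)$, Theorem \ref{thm:main_theorem}, and the identification of $\scr{F}$ under H\"ormander's condition as the single-leaf foliation with $\Cs(\scr{F})=\scr{K}(L^2\scr{X})$. Nothing is missing.
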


\bigskip

Applications to equivariant index theory require further results, which we shall defer to a subsequent paper.  For now, we will provide a brief application to noncommutative harmonic analysis on compact Lie groups.  

Suppose that $\Lie{K}$ is a compact Lie group, and $\Lie{K}_1$, $\Lie{K}_2$ are closed subgroups which generate $\Lie{K}$.  Let $U$ be a unitary representation of $\Lie{K}$ on a Hilbert space $H$ for which all irreducible $\Lie{K}$-types have finite multiplicity.\  If $\pi_1$, $\pi_2$ are irreducible representations for $\Lie{K}_1$ and $\Lie{K}_2$, respectively, then the $\pi_1$- and $\pi_2$-isotypical subspaces of $H$ are essentially orthogonal, in the sense that they have arbitrarily small inner products on the complement of some finite-dimensional subspace. (See Theorem \ref{thm:essential_orthotypicality} for a precise statement.)

\bigskip

The paper is organized as follows.  In Section \ref{sec:PsiDOs_on_groups} we provide some background on noncommutative microlocal analysis as it pertains to longitudinal pseudodifferential operators on Lie groups.  The technical heart of the paper is Section \ref{sec:singular_coordinates}, in which we prove results about integral operators in nonsingular coordinate systems.  In Sections \ref{sec:groups} and \ref{sec:local} we apply this analysis to longitudinal pseudodifferential operators.  The final section describes the above-mentioned application to noncommutative harmonic analysis.

We would like to thank N.~Higson, C.~Debord and J.-M.~Lescure for helpful conversations.

\subsection{Notation}

Throughout, $\Lie{G}$ will denote a connected Lie group, and $\Lie{H}_1,\ldots,\Lie{H}_r$ closed subgroups.  For any Lie group, we shall use the corresponding Fraktur letter to denote its Lie algebra, often without mention.  We will denote left Haar measure on $\Lie{G}$ by $\Haar{G}$ or $\dHaar{G}x$, and the modular function by $\modular{G}$.    




\section{Longitudinal pseudodifferential operators}
\label{sec:PsiDOs_on_groups}

Let $\scr{F}$ be a smooth foliation of a manifold $\scr{X}$.  If $\scr{F}$ is the tangent bundle to a smooth fibration $p:\scr{X}\to\scr{Y}$ then elements of $\PsiDO{d}(\scr{F})$ can be defined as smooth families of pseudodifferential operators of order $d$ on the fibres, as in \cite{Atiyah-Singer4}.  Then $\PsiDOc{d}(\scr{F})$ will be the subset of those whose distributional kernel has compact support in $\scr{X}\times\scr{X}$.  However, in what follows it will be convenient if we don't have to continually assume that our foliations are fibrations.  For this, some brief technical remarks are in order to clarify our definitions.

As Connes observed, the correct home for pseudodifferential operators on foliations is the holonomy groupoid $\Grpd \defeq \Grpd(\scr{X},\scr{F})$. (See \cite{Connes:NCG, Connes:survey} for the definitions.)  The smooth convolution algebra $C_c^\infty(\Grpd)$ is naturally represented on the $L^2$-spaces of the leaves, and the resulting $C^*$-algebra is Connes' foliation algebra $C^*(\scr{X},\scr{F})$.  

However, in order to make sense of statements such as Theorem \ref{thm:main_theorem} above, we need the various convolution algebras to be all represented on a common Hilbert space.  Thus, in this paper, we will be working with what J.~Roe \cite{Roe:foliations} refers to as the ``global representation'' of $C_c^\infty(\Grpd)$ on $L^2(\scr{X})$.  Specifically, for $k\in C_c^\infty(\Grpd)$, $f\in L^2(\scr{X})$, set
$$
  k\cdot f(x) \defeq \int_{\Grpd_x} k(\gamma) f( r(\gamma)) \,d\gamma.
$$
where, $r,s$ are the range and source maps of $\Grpd$ and $\Grpd_x \defeq s^{-1}(x)$.  It is the image of this representation in $\scr{B}(L^2\scr{X})$ which we refer to as $\PsiDOc{-\infty}(\scr{F})$.  Note that this will not generally extend to a representation of $C^*(\Grpd)$, unless the transverse component of Riemannian measure is holonomy invariant (as is the case for fibrations).

\medskip

In any case, we denote by $\Cs(\scr{F})$ the norm-closure of $\PsiDOc{-\infty}(\scr{F})$ in $\scr{B}(L^2(\scr{X}))$. If $\scr{F}$ comes from a fibration, $\Cs(\scr{F}) \cong C^*_r(\scr{X},\scr{F})$.

We also remark that $\PsiDOc{-d}(\scr{F})$ is dense in $\Cs(\scr{F})$ for any $-\infty\leq d <0$.  Thus, our $C^*$-algebraic approach destroys any notion of order of (pseudo-)\linebreak differentiation and hence precludes all of the subtle analytic estimates that a full pseudodifferential calculus affords.  On the other hand, this norm-density allows us to work with the relatively simple class of longitudinally smoothing operators, and still our results remain strong enough for applications to index theory, which is concerned with much coarser analytic properties (Fredholmness, compactness, {\em etc}).

\bigskip

We now specialize to Lie groups, where we will reformulate longitudinal smoothing operators following the pattern of \cite{Taylor:microlocal}.

Let $\Lie{G}$ be a connected Lie group, and let $\Lie{H}$ be a connected Lie subgroup, not necessarily closed.  We endow $\Lie{H}$ with the topology associated to its intrinsic smooth structure, not the subspace topology.  Let $\scr{H}$ denote the foliation of $\Lie{G}$ by left-cosets of $\Lie{H}$. 

A longitudinally smoothing operator $K\in\PsiDOc{-\infty}(\scr{H})$ is given by an integral formula
\begin{equation}
\label{eq:Op_k}
  Ku(x) \defeq \int_{\Lie{H}} k(x,h) u(h^{-1}x) \, dh,
\end{equation}
where $k \in C_c^\infty(\Lie{G}\times\Lie{H})$.  We will sometimes denote such an operator by $\Op[H]{k}$.

Define $L^2(\Lie{G})$ with respect to left-invariant Haar measure.  The adjoint of $\Op[H]{k}$ is $\Op[H](k^*)$ where
\begin{equation}
\label{eq:adjoint}
  k^*(x,h) \defeq \overline{k(h^{-1}x,h^{-1})}\, \modular{H}(h).
\end{equation}


\section{Integral operators in singular coordinate systems}
\label{sec:singular_coordinates}

Let $\Lie{G}$, $\Lie{H}$ be as in the previous section.  We need to generalize the operators of \eqref{eq:Op_k} by reparameterizing the variable $h\in \Lie{H}$ with a singular change of coordinates, as follows.

Let $M$ be a smooth manifold with smooth measure $dm$ and let $\phi: M \to \Lie{H}$ be a smooth function.  We consider operators $A$ of the form
\begin{equation}
\label{eq:Op_a}
  Au(x) \defeq \int_M a(x,m) u(\phi(m)^{-1}x) \,dm
\end{equation}
for $a\in C_c^\infty(\Lie{G}\times M)$.

\begin{lemma}
\label{lem:norm_estimate}
The formula \eqref{eq:Op_a} defines a bounded operator $A$ on $L^2(\Lie{G})$ with norm $\|A\| \leq \|a\|_\infty \Vol(\supp[M](a))$.

\end{lemma}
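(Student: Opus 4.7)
The plan is to bound $\|Au\|_{L^2(\Lie{G})}$ via the Cauchy--Schwarz inequality applied pointwise in $x$, followed by Fubini's theorem and the left-invariance of $\Haar{G}$. I would avoid a direct Schur-test argument here, since $\phi$ is not assumed injective and so $A$ cannot obviously be written as an integral operator with a kernel on $\Lie{G}\times\Lie{G}$; the Cauchy--Schwarz route sidesteps this issue entirely.

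Setting $C \defeq \|a\|_\infty \Vol(\supp[M](a))$, the first step is to apply Cauchy--Schwarz to \eqref{eq:Op_a} for each fixed $x$, splitting the integrand as $|a(x,m)|^{1/2}\cdot|a(x,m)|^{1/2}|u(\phi(m)^{-1}x)|$, which yields
$$|Au(x)|^2 \leq \Bigl(\int_M |a(x,m)|\,dm\Bigr)\Bigl(\int_M |a(x,m)|\,|u(\phi(m)^{-1}x)|^2\,dm\Bigr).$$
The first factor is visibly bounded by $C$.

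I would then integrate over $x\in\Lie{G}$ and swap the order of integration by Fubini (legal since $a$ has compact support); for each fixed $m$, the substitution $x = \phi(m)y$ preserves $\Haar{G}$ by left-invariance. This turns the right-hand side into
$$C \int_{\Lie{G}} |u(y)|^2 \Bigl(\int_M |a(\phi(m)y, m)|\,dm\Bigr)dy,$$
and exactly the same sup-times-volume estimate as before bounds the inner $m$-integral by $C$. The conclusion $\|Au\|_{L^2}^2 \leq C^2 \|u\|_{L^2}^2$ then follows immediately.

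There is no real obstacle in this argument beyond bookkeeping; its one conceptual point is that it is precisely the left-invariance of Haar measure that lets one absorb the (possibly singular) reparameterization $\phi$ in the middle of the calculation. This is, in fact, the reason why the global representation of $\PsiDOc{-\infty}(\scr{H})$ is defined with respect to left Haar measure on $\Lie{G}$ in the first place.
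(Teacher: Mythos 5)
Your proof is correct. The paper's own argument is shorter and more abstract: it writes $A = \int_M A_m\,dm$ where $A_m$ is multiplication by $a(\cdot,m)$ followed by the left-translation $L_{\phi(m)}$, observes that each $\|A_m\| \leq \|a\|_\infty$ (since $L_{\phi(m)}$ is unitary, again by left-invariance of Haar measure) and that $A_m = 0$ off $\supp[M](a)$, and then invokes the integral triangle inequality $\|\int_M A_m\,dm\| \leq \int_M\|A_m\|\,dm$. Your Cauchy--Schwarz plus Fubini plus substitution argument is, in effect, an elementary and self-contained proof of that same inequality in this specific situation. Both routes hinge on the same fact---left-invariance of Haar measure, used by you through the change of variables $x = \phi(m)y$ and by the paper through the unitarity of left translation---and your closing remark correctly identifies this as the structural reason the estimate works. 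The paper's version buys brevity and makes the decomposition $A = \int_M A_m\,dm$ explicit (which is conceptually useful for the later lemmas); your version buys a fully explicit $L^2$ computation that does not presuppose familiarity with operator-valued integrals. Your instinct to avoid a Schur test because $\phi$ need not be injective is sound, though as you can see one can also bypass the issue entirely by never looking for a kernel on $\Lie{G}\times\Lie{G}$ at all.
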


\begin{proof}
Let $L_g$ denote the left regular representation of $g\in\Lie{G}$ on $L^2\Lie{G}$.  We can write $A = \int_M A_m \,dm$ where $A_m$ is the operator $u \mapsto a(\cdot,m) L_{\phi(m)} u$.  Note that $\|A_m\| \leq \|a\|_\infty$, and that $A_m=0$ for $m\notin\supp[M](a)$.  The result follows.
\end{proof}

Define the {\em critical set} of $\phi: M \to \Lie{H}$ as $\crit(\phi) \defeq \{m\in M \st D\phi(m)\text{ is not onto}\}$.  The $M$-support of $a\in C_c^\infty(\Lie{G}\times M)$, denoted $\supp[M](a)$, is the projection of the support of $a$ onto $M$. 

\begin{lemma}
\label{lem:supported_on_regular_set}
Suppose that $\supp[M](a) \cap \crit(\phi) = \emptyset$.  Then the operator $A$ of \eqref{eq:Op_a} is in $\Psi_c^{-\infty}(\scr{H})$.
\end{lemma}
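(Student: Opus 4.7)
The plan is to exhibit an explicit $k \in C_c^\infty(\Lie{G}\times\Lie{H})$ such that $A = \Op[H]{k}$, by pushing the $M$-integration forward along $\phi$. This is possible precisely because, on the support of $a$, $\phi$ is a submersion, so the fibres of $\phi$ are smooth submanifolds over which we can integrate out a Jacobian density.

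First, I would cover $\supp[M](a)$ by finitely many open charts $\{U_\alpha\}$ of $M$, each small enough that the submersion theorem provides local coordinates $(y,z) \in \RR^p \times \RR^q$, with $p = \dim\Lie{H}$ and $q = \dim M - p$, in which $\phi$ has the form $\phi(y,z) = \eta_\alpha(y)$ for some local diffeomorphism $\eta_\alpha$ from an open subset of $\RR^p$ into $\Lie{H}$. Choose a smooth partition of unity $\{\chi_\alpha\}$ subordinate to this cover, write $a = \sum_\alpha \chi_\alpha a$, and by linearity reduce to proving the lemma for each summand separately.

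On a single chart, I would perform the substitution $h = \eta_\alpha(y)$ and apply Fubini's theorem, comparing $dm$ to $d\Haar{H}$ on $\Lie{H}$ to absorb the corresponding Jacobian. Integrating out the fibre coordinate $z$ produces
\[
  k_\alpha(x,h) \defeq \int_{\RR^q} (\chi_\alpha a)\bigl(x, \eta_\alpha^{-1}(h), z\bigr)\, J_\alpha\bigl(\eta_\alpha^{-1}(h), z\bigr)\, dz,
\]
with $J_\alpha$ a smooth positive density encoding both the local expression of $dm$ and the pull-back of $\dHaar{H}h$ under $\eta_\alpha$. Standard differentiation-under-the-integral arguments show $k_\alpha \in C^\infty(\Lie{G}\times\Lie{H})$, and its $\Lie{G}$-support lies inside that of $a$, while its $\Lie{H}$-support lies inside $\phi(\supp[M](\chi_\alpha a))$. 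Summing, $k \defeq \sum_\alpha k_\alpha$ satisfies $A = \Op[H]{k}$ by construction.

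The only mildly delicate point—and the one I would be most careful about—is verifying that $k$ has compact support in $\Lie{G}\times\Lie{H}$ with $\Lie{H}$ carrying its intrinsic Lie group topology rather than the subspace topology inherited from $\Lie{G}$. But $\phi$ is by hypothesis smooth as a map into $\Lie{H}$ with its intrinsic structure, and $\supp[M](a)$ is compact in $M$, so $\phi(\supp[M](a))$ is automatically compact in $\Lie{H}$ in the required sense; this is precisely what is needed to conclude $k \in C_c^\infty(\Lie{G}\times\Lie{H})$ and hence $A \in \PsiDOc{-\infty}(\scr{H})$.
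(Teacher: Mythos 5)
Your argument is essentially the paper's: both proofs localize via a partition of unity, use the submersion property of $\phi$ on $\supp[M](a)$ to straighten $\phi$ in local coordinates, change variables from $M$ to $\Lie{H}$ times a fibre factor, and integrate out the fibre variable to produce a kernel $k\in C_c^\infty(\Lie{G}\times\Lie{H})$. The only cosmetic difference is that you invoke the submersion theorem directly, whereas the paper constructs the straightening explicitly via the augmented maps $\Phi_i$ and their Jacobians $J_i$ (chiefly because the Jacobian vector $\mathbf{J}$ is reused later in Corollary~\ref{cor:analytic_case}); your closing remark about compactness of $\phi(\supp[M](a))$ in the intrinsic topology of $\Lie{H}$ is correct and a point worth making explicit.
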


\begin{proof}
Let $d\defeq \dim(\Lie{H})$ and $n\defeq \dim (M)$.  Note that if $n<d$ then $\crit(\phi)=M$, so necessarily $n\geq d$.
By using a partition of unity subordinate to local charts on $M$, we may reduce to the case where $M$ is a bounded open subset of $\RR^n$. 

Let $N\defeq {n \choose d}$.  Let $E_1,\ldots, E_N$ denote the coordinate $d$-planes of $\RR^n$ (in any order).  For each of these, we will define a Jacobian of $\phi$ in the spirit of the Implicit Function Theorem.  Thus, let $p_i$ denote the orthogonal projection of $\RR^n$ onto $E_i^{\perp}$, and augment $\phi$ to the map
$$
  \Phi_i : M \to \Lie{H}\times E_i^\perp; \quad m \mapsto (\phi(m), p_i(m)).
$$
This is a local diffeomorphism at $m$ if and only if $D\phi(m)|_{E_i}$ is onto.  Define $J_i$ as the Radon-Nikodym derivative of $\Phi_i$:
\begin{equation}
\label{eq:Jacobian}
  J_i(m) \defeq \frac{{\Phi_i}_*(dm)}{\dHaar{H} x \, de'},
\end{equation}
where $de'$ is Lebesgue measure on $E_i^\perp$.

Let 
\begin{equation}
\label{eq:Jacobians}
\mathbf{J} = (J_1,\ldots,J_N):M\to\RR^N.
\end{equation}  
By the hypothesis of the Lemma, there is some $\delta>0$ such that the open sets $U_i \defeq J_i^{-1}(\delta,\infty)$ cover $\supp[M](a)$.  Choose $\psi_i \in C_c^\infty(U_i)$ such that $\sum_i \psi_i \equiv 1$ on $\supp[M](a)$.   Put $a_i(x,m)\defeq \psi_i(m)a(x,m)$, so that $A=\sum_iA_i$ where
\begin{equation}
\label{eq:Op_ai}
  A_iu(x) \defeq \int_{U_i} a_i(x,m) u(\phi(m)^{-1}x) \,dm.
\end{equation}

Fix $i\in\{1,\ldots,N\}$.   By design, $\Phi_i$ is a local diffeomorphism on $\supp[M](a_i)$, so we can find  a finite cover $\{V_j\}$ of $\supp[M](a_i)$ by relatively compact open sets on which $\Phi_i$ is a diffeomorphism to its range.  We write $\Phi_{ij}\defeq\Phi_i|_{V_j}$ for these diffeomorphisms.
Now let $\chi_j\in C^\infty(V_j)$ be a partition of unity subordinate to $\{V_j\}$ and put $a_{ij}(x,m) \defeq \chi_j(m) a_i(x,m)$.  Then
\begin{eqnarray}  
  A_i u(x) &=& \sum_j \int_{V_j}  a_{ij}(x,m)\, u(\phi(m)^{-1} x) \,dm . \nonumber \\
    &=& \sum_j \int_{(h,e') \in \Phi_i(V_j)} a_{ij}(x,\Phi_{ij}^{-1}(h,e')) \, u(h^{-1}x)\,
      J_i(\Phi_{ij}^{-1}(h,e'))^{-1} \,dh \,de' \nonumber \\
    &=& \sum_j \int_{(h,e') \in \Phi_i(V_j)}\, k_{ij}(x,h,e') \,u(h^{-1}x) \,dh \,de' , 
      \label{eq:local_Op_a}
\end{eqnarray}
where
$$
  k_{ij}(x,h,e') \defeq a_{ij}(x,\Phi_{ij}^{-1}(h,e'))  \,J_i(\Phi_{ij}^{-1}(h,e'))^{-1} .
$$
Since $J_i(m) \geq \delta$ on $\supp[M](a_{ij})$,  $k_{ij}(x,h,e')$ extends to a smooth compactly supported function on $\Lie{G}\times\Lie{H}\times E_i^\perp$.  We obtain
$$
  A_i u(x) = \int_{\Lie{G}}\left(\sum_j \int_{E_i^\perp} k_{ij}(x,h,e') \,de' \right) u(h^{-1}x) \,dh.
$$
The quantity in parentheses is a smooth compactly supported function of $(x,h)\in\Lie{G\times H}$, so $A_i\in \PsiDOc{-\infty}(\scr{H})$.  This completes the proof.

\end{proof}

\begin{corollary}
\label{cor:multiplier}
Any $A$ of the form \eqref{eq:Op_a} is a multiplier of the $C^*$-algebra $\Cs(\scr{H})$.
\end{corollary}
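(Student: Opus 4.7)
The plan is to verify that for every $B = \Op[H]{k} \in \PsiDOc{-\infty}(\scr{H})$ the compositions $AB$ and $BA$ again belong to $\PsiDOc{-\infty}(\scr{H})$. Granted this, since $A$ is bounded by Lemma~\ref{lem:norm_estimate} and $\PsiDOc{-\infty}(\scr{H})$ is norm-dense in $\Cs(\scr{H})$ by definition, the maps $B\mapsto AB$ and $B\mapsto BA$ extend by continuity to $\Cs(\scr{H})\to\Cs(\scr{H})$, giving the multiplier property.

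The heart of the argument is to recast each composition as an operator of the form \eqref{eq:Op_a} on an enlarged parameter manifold carrying a better-behaved parameterization. Writing $AB$ out by Fubini and using the identity $h^{-1}\phi(m)^{-1}x = (\phi(m)h)^{-1}x$,
$$
  ABu(x) = \int_{M\times\Lie{H}} a(x,m)\, k(\phi(m)^{-1}x, h)\, u\bigl((\phi(m)h)^{-1}x\bigr)\, dh\, dm,
$$
which is \eqref{eq:Op_a} on $\widetilde{M} \defeq M\times\Lie{H}$, with amplitude $\widetilde{a}(x,m,h) \defeq a(x,m)k(\phi(m)^{-1}x,h) \in C_c^\infty(\Lie{G}\times\widetilde{M})$ and parameterization $\widetilde{\phi}(m,h) \defeq \phi(m)h$. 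The analogous identity for $BA$ reads
$$
  BAu(x) = \int_{\Lie{H}\times M} k(x,h)\, a(h^{-1}x, m)\, u\bigl((h\phi(m))^{-1}x\bigr)\, dh\, dm,
$$
on $\Lie{H}\times M$ with parameterization $(h,m)\mapsto h\phi(m)$; again the amplitude is smooth and compactly supported.

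The key observation is that, regardless of the singularities of $\phi$, each of the new parameterizations is a submersion everywhere. Indeed, with $m$ held fixed, the partial maps $h\mapsto\phi(m)h$ and $h\mapsto h\phi(m)$ are the left and right translation diffeomorphisms of $\Lie{H}$, so their derivatives already surject onto each tangent space of $\Lie{H}$, forcing the full differentials of $\widetilde{\phi}$ to be surjective. Hence the critical sets are empty, and Lemma~\ref{lem:supported_on_regular_set} applies directly to give $AB, BA \in \PsiDOc{-\infty}(\scr{H})$. No step here presents a serious obstacle; the only mildly substantive point is the recognition that the singularities of $\phi$ are harmlessly absorbed upon composition with a longitudinal smoothing operator, because the extra variable $h$ fills in the missing $\Lie{H}$-directions.
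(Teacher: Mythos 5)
Your argument is correct and follows essentially the same route as the paper: rewrite $AB$ and $BA$ as operators of the form \eqref{eq:Op_a} over $M\times\Lie{H}$ (resp.\ $\Lie{H}\times M$), observe that the enlarged parameterizations $(m,h)\mapsto\phi(m)h$ and $(h,m)\mapsto h\phi(m)$ are submersions because translation in the $\Lie{H}$-slot already surjects, apply Lemma~\ref{lem:supported_on_regular_set}, and finish by density. You are a bit more explicit than the paper about why the submersion property holds and about the density step, but the substance is identical.
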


\begin{proof}
Let $K \in \Psi_c^{-\infty}(\scr{H})$ be given in the form \eqref{eq:Op_k}, for some $k\in C_c^\infty(\Lie{G}\times\Lie{H})$.  Then
$$
  AKu(x) = \int_{M\times\Lie{H}} a(x,m)\, k( \phi(m)^{-1}x, h) \, u((\phi(m)h)^{-1}x)\, dm\,dh.
$$
The map
$$
  M\times\Lie{H} \to \Lie{H}; \quad (m,h) \mapsto \phi(m)h
$$
is a submersion, so by the above lemma $AK\in\Psi^{-\infty}(\scr{H})$.  Similarly, $KA\in\Psi^{-\infty}(\scr{H})$.  A density argument completes the proof.
\end{proof}

\begin{theorem}
\label{thm:Op_a_is_compact}
Let $a\in C_c^\infty(\Lie{G}\times M)$ and $\phi:M\to\Lie{G}$ be smooth.  If $\crit(\phi)$ has measure zero, then the operator $A$ of Equation \eqref{eq:Op_a}
is in $\Cs(\scr{H})$.
\end{theorem}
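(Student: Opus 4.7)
The plan is to deduce this from the two preceding lemmas by a standard approximation argument: write $A$ as a norm limit of operators whose $M$-support avoids the critical set, and apply Lemma \ref{lem:supported_on_regular_set} to each approximant.

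First, I would observe that $\crit(\phi)$ is closed in $M$, because the condition ``$D\phi(m)$ is not onto'' is equivalent to the vanishing of all $d\times d$ minors of a matrix representation of $D\phi$, hence is a closed condition. Therefore $K \defeq \crit(\phi) \cap \supp[M](a)$ is a compact subset of $M$ of measure zero. Since the smooth measure $dm$ is outer regular on compact sets, for every $\epsilon>0$ I can find an open set $V_\epsilon \supseteq K$ with $\Vol(V_\epsilon) < \epsilon$. Using a standard smooth Urysohn argument on $M$, I can then produce $\chi_\epsilon \in C^\infty(M)$ with $0 \le \chi_\epsilon \le 1$, equal to $0$ on some open neighborhood $W_\epsilon \subseteq V_\epsilon$ of $K$, and equal to $1$ outside $V_\epsilon$.

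Set $a_\epsilon \defeq \chi_\epsilon \, a \in C_c^\infty(\Lie{G}\times M)$ and let $A_\epsilon$ be the corresponding operator as in \eqref{eq:Op_a}. Because $\chi_\epsilon$ vanishes on the open set $W_\epsilon \supseteq K$, one has $\supp[M](a_\epsilon) \subseteq \supp[M](a) \setminus W_\epsilon$, which is disjoint from $\crit(\phi)$. Lemma \ref{lem:supported_on_regular_set} then gives $A_\epsilon \in \Psi_c^{-\infty}(\scr{H})$. On the other hand, $a - a_\epsilon = (1-\chi_\epsilon)a$ is supported in $V_\epsilon$ inside $M$, and is bounded pointwise by $\|a\|_\infty$, so Lemma \ref{lem:norm_estimate} yields
\begin{equation*}
  \|A - A_\epsilon\| \;\le\; \|a - a_\epsilon\|_\infty \,\Vol(\supp[M](a - a_\epsilon)) \;\le\; \|a\|_\infty \,\epsilon .
\end{equation*}
Letting $\epsilon \to 0$ shows that $A$ lies in the norm closure of $\Psi_c^{-\infty}(\scr{H})$, which is precisely $\Cs(\scr{H})$.

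There is no real obstacle here; the only point that requires a little care is the existence of the smooth cutoff $\chi_\epsilon$ vanishing on a neighborhood of $K$ and equal to $1$ outside an arbitrarily small neighborhood of $K$, together with the outer regularity of the smooth measure $dm$ on the compact zero-measure set $K$. The heart of the matter—the fact that regularity of $\phi$ on $\supp[M](a_\epsilon)$ forces $A_\epsilon$ to be a longitudinally smoothing operator—has already been done in Lemma \ref{lem:supported_on_regular_set}, and the norm estimate of Lemma \ref{lem:norm_estimate} is exactly what makes the trivial ``measure zero'' hypothesis strong enough to close the gap.
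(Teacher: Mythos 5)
Your proof is correct and follows essentially the same route as the paper: both split $A$ by a smooth cutoff into a piece whose $M$-support avoids $\crit(\phi)$ (handled by Lemma \ref{lem:supported_on_regular_set}) and a piece with small $M$-support (handled by Lemma \ref{lem:norm_estimate}). You are marginally more careful than the paper---explicitly noting that $\crit(\phi)$ is closed and restricting attention to the compact set $K=\crit(\phi)\cap\supp[M](a)$ before invoking outer regularity---but this is the same argument with the details spelled out.
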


\begin{proof}
Fix $\epsilon>0$.  Choose an open neighbourhood $U$ of $\crit(\phi)$ with measure less than $\epsilon$.   Let $\chi_1, \chi_2$ be a smooth partition of unity on $M$ with $\supp(\chi_1) \subset U$ and $\supp(\chi_2) \subset M\setminus\crit(\phi)$.  Then $A=A_1+A_2$ with
$$
  A_i u(x) \defeq \int_M \chi_i(m) \,a(x,m) \,u(\phi(m)^{-1}x) \,dm .
$$
Now, $A_2 \in \Psi_c^{-\infty}(\scr{H})$ by Lemma \ref{lem:supported_on_regular_set}, and $\|A_1\| < \epsilon$ by Lemma \ref{lem:norm_estimate}.

\end{proof}

Recall that every Lie group admits a canonical real-analytic structure.  In practice, it will be real-analyticity that ensures the measure-zero critical set required for Theorem \ref{thm:Op_a_is_compact}.

\begin{corollary}
\label{cor:analytic_case}
Let $M$ be a connected real-analytic manifold, and $\phi:M\to\Lie{H}$ be a real-analytic map with image of nonzero measure, then the operator $A$ of \eqref{eq:Op_a} is in $\Cs(\scr{H})$.
\end{corollary}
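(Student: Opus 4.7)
The plan is to invoke Theorem \ref{thm:Op_a_is_compact} directly, so everything reduces to showing that the critical set $\crit(\phi)$ has measure zero in $M$ under the real-analyticity hypothesis. First I would handle a trivial case: if $\dim M < \dim \Lie{H}$, then $D\phi(m)$ can never be surjective, so $\crit(\phi) = M$ and the image of $\phi$ has measure zero by Sard's theorem, contradicting the hypothesis. Hence we may assume $\dim M \geq \dim \Lie{H} =: d$.

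Next I would use real-analyticity to describe $\crit(\phi)$ as a real-analytic subvariety. In any local analytic chart on $M$ and any local analytic chart on $\Lie{H}$, the Jacobian matrix of $\phi$ has real-analytic entries, and $\crit(\phi)$ is cut out by the simultaneous vanishing of all $d \times d$ minors---a finite collection of real-analytic equations. Thus $\crit(\phi)$ is locally the zero locus of a family of real-analytic functions.

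The key dichotomy, which I would then exploit, is that a proper real-analytic subvariety of a connected real-analytic manifold has Lebesgue measure zero. (Locally, if even one defining analytic function is not identically zero, its zero set has measure zero; and by a standard connectedness argument for analytic sets, either all these local defining functions vanish identically---forcing $\crit(\phi)=M$---or the subvariety is nowhere dense and locally of measure zero, hence globally of measure zero.) So it only remains to rule out $\crit(\phi) = M$. But if $D\phi$ failed to be surjective everywhere, then Sard's theorem would again force $\phi(M)$ to be a set of measure zero in $\Lie{H}$, contradicting the hypothesis.

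Therefore $\crit(\phi)$ is a proper analytic subvariety, hence has measure zero, and Theorem \ref{thm:Op_a_is_compact} applies to give $A \in \Cs(\scr{H})$. I expect the only delicate step to be the measure-zero statement for proper analytic subvarieties; this is classical but worth stating carefully, since a global argument via connectedness of $M$ is needed to pass from the local picture (where ``proper'' is automatic once one nontrivial defining equation is present) to the global conclusion.
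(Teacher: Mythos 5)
Your proof is correct and follows essentially the same route as the paper's: reduce to showing $\crit(\phi)$ has measure zero, observe that $\crit(\phi)$ is locally cut out by finitely many real-analytic equations, use Sard's theorem to rule out $\crit(\phi)=M$, and use real-analyticity plus connectedness to conclude measure zero. The only cosmetic difference is that you describe the local defining equations as the $d\times d$ minors of the Jacobian matrix of $\phi$, whereas the paper re-uses the vector $\mathbf{J}=(J_1,\ldots,J_N)$ of augmented Radon--Nikodym Jacobians already introduced in the proof of Lemma~\ref{lem:supported_on_regular_set}---these two families of analytic functions have the same zero locus. One small point in your favour: you spell out explicitly that connectedness of $M$ (via the identity theorem for real-analytic functions) is what lets one pass from ``$\mathbf{J}\not\equiv 0$ globally, by Sard'' to ``$\mathbf{J}\not\equiv 0$ on each chart, hence the zero set has measure zero in each chart.'' The paper compresses this into a single sentence, but that connectedness step is indeed the reason the hypothesis is there.
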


\begin{proof}
As above, let $d\defeq\dim(\Lie{H})$, $n\defeq \dim(M)$ and $N \defeq {n \choose d}$.  
In any analytic chart $U$ of $M$, the critical set of $\phi$ is the zero set of the real-analytic function $\mathbf{J}:U\to\RR^N$ of Equation \eqref{eq:Jacobians}.  This function is not everywhere zero, by Sard's Theorem.  Real-analyticity implies $\mathbf{J}^{-1}(\mathbf{0})$ has measure zero.

\end{proof}


\section{Products of longitudinal pseudodifferential operators on Lie groups}
\label{sec:groups}

Let $\Lie{H}_1,\ldots,\Lie{H}_r$ be connected Lie subgroups (not necessarily closed) of the connected Lie group $\Lie{G}$.  Let $\Lie{H}$ denote the subgroup they generate:
$$
  \Lie{H} \defeq \{ x_1 x_2 \cdots x_k \st \text{Each $x_i$ is in some $\Lie{H}_j$}\}.
$$
This is a connected Lie subgroup whose Lie algebra $\lie{h}$ is the Lie algebra generated by $\lie{h}_1,\ldots,{h}_r$.  We use $\scr{H}$ (resp.~$\scr{H}_j$) to denote the foliation of $\Lie{G}$ by left-cosets of $\Lie{H}$ (resp.~$\Lie{H}_j$).

\begin{theorem}
\label{thm:compact_product}
With the above notation, 
$$
  \Cs(\scr{H}_1) \cdots \Cs(\scr{H}_r) \subseteq \Cs(\scr{H})
$$
\end{theorem}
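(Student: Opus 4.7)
The plan is a density reduction followed by a $C^*$-algebraic manoeuvre based on Corollaries \ref{cor:multiplier} and \ref{cor:analytic_case}. Since $\PsiDOc{-\infty}(\scr{H}_j)$ is norm-dense in $\Cs(\scr{H}_j)$ and multiplication is jointly norm-continuous on norm-bounded sets, it suffices to treat
$$
  A \defeq K_1 K_2 \cdots K_r,
$$
where each $K_j$ has the form $K_j u(x) = \int_{\Lie{H}_j} k_j(x,h)\, u(h^{-1}x)\, dh$ for some $k_j \in C_c^\infty(\Lie{G}\times\Lie{H}_j)$. Iterating this formula and interchanging the order of integration exhibits $A$ in the form \eqref{eq:Op_a}, with parameter space $M = \Lie{H}_1\times\cdots\times\Lie{H}_r$, amplitude $a \in C_c^\infty(\Lie{G}\times M)$ built from the $k_j$'s translated by appropriate elements of $\Lie{G}$, and multiplication map $\phi:(h_1,\ldots,h_r)\mapsto h_1 h_2\cdots h_r \in \Lie{H}$. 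In particular Corollary \ref{cor:multiplier} identifies $A$ as a multiplier of the $C^*$-algebra $\Cs(\scr{H})$.

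A direct application of Corollary \ref{cor:analytic_case} to $A$ itself is generally not available: the image $\phi(M) = \Lie{H}_1\cdots\Lie{H}_r$ can have measure zero in $\Lie{H}$, since at the identity $D\phi$ surjects only onto the vector subspace $\lie{h}_1+\cdots+\lie{h}_r$, which may be strictly smaller than $\lie{h}$ whenever iterated Lie brackets are needed to generate $\lie{h}$ (as happens already in the Heisenberg group with $\Lie{H}_1, \Lie{H}_2$ its two coordinate subgroups). To bypass this I would work instead with the positive element $(A^*A)^N$ for $N$ sufficiently large. Using \eqref{eq:adjoint} to note that $\PsiDOc{-\infty}(\scr{H}_j)$ is closed under adjoints, $(A^*A)^N$ is again of the form \eqref{eq:Op_a}, now with parameter space a much longer product of copies of the various $\Lie{H}_j$'s, and real-analytic multiplication map $\phi_N$ whose image is a long palindromic product inside $\Lie{H}$.

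The main obstacle is to verify that for $N$ sufficiently large, $\phi_N$ has image of nonzero measure in $\Lie{H}$. This is a Chow-type assertion: because $\lie{h}_1,\ldots,\lie{h}_r$ generate $\lie{h}$ as a Lie algebra, the Baker--Campbell--Hausdorff formula applied to sufficiently long iterated products eventually recovers all commutator directions, so the image contains an open subset of $\Lie{H}$. Granting this, Corollary \ref{cor:analytic_case} yields $(A^*A)^N \in \Cs(\scr{H})$. Continuous functional calculus applied to this positive element with the function $t\mapsto t^{1/N}$ (which vanishes at $0$ and is continuous on the spectrum) then places $A^*A = \bigl((A^*A)^N\bigr)^{1/N} \in \Cs(\scr{H})$. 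Finally, since $\Cs(\scr{H})$ is a closed two-sided ideal of its multiplier algebra $M(\Cs(\scr{H}))$ and $A \in M(\Cs(\scr{H}))$, the $C^*$-identity in the quotient $M(\Cs(\scr{H}))/\Cs(\scr{H})$ gives $\|[A]\|^2 = \|[A^*A]\| = 0$, whence $A \in \Cs(\scr{H})$.
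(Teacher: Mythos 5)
Your proposal is correct and follows essentially the same route as the paper: rewrite the product as an operator in singular coordinates, pass to a long palindromic power whose image has positive measure, apply Corollary \ref{cor:analytic_case}, and finish with the multiplier/ideal argument (which you spell out via functional calculus and the $C^*$-identity in the quotient, where the paper cites a standard reference). The only superficial differences are that you work with $(A^*A)^N$ rather than $(AA^*)^n$ and skip the paper's preliminary special case in which $\Lie{H}_1\cdots\Lie{H}_r$ already has positive measure.
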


\begin{proof}
Let $\Lie{H}_1 \Lie{H}_2 \cdots \Lie{H}_r$ denote the set of products $\{h_1h_2\cdots h_r \st h_j\in\Lie{H}_j \text{ for all } j \}$. We first prove the theorem under the assumption that $\Lie{H}_1 \Lie{H}_2 \cdots \Lie{H}_r$ has nonzero measure in $\Lie{H}$.

Let $K_j = \Op[H](k_j)\in\Psi^{-\infty}_c(\scr{H}_j)$, with $k_j\in C_c^\infty(\Lie{G}\times\Lie{H})$.   By iterating equation \eqref{eq:Op_k}, we see that
$$
  K_1 K_2\cdots K_r u(x) = \int_{\Lie{H}_1\times\cdots\times\Lie{H}_r} 
    a(x,h_1,\ldots,h_r) \,u((h_1\cdots h_r)^{-1} x) 
    \,dh_1\cdots \,dh_r,
$$
where
$$
  a(x,h_1,\ldots,h_r) \defeq \prod_{j=1}^r k_j((h_1\cdots h_{j-1})^{-1}x,h_j) .
$$
Here, $a\in C_c^\infty(\Lie{G}\times(\Lie{H}_1\times\cdots\times\Lie{H}_r))$ and the map $(h_1,\ldots,h_r)\mapsto h_1\cdots h_r$ is real-analytic, so Corollary \ref{cor:analytic_case} implies that $K_1\cdots K_r\in\Cs(\scr{H})$.  

Now we drop the assumption on the product $\Lie{H}_1\cdots \Lie{H}_r$.  Note that $K_1\cdots K_r$ is in the multiplier algebra of $\Cs(\scr{H})$, by Corollary \ref{cor:multiplier}.
Since $\Lie{H}_1,\ldots,\Lie{H}_r$ generate $\Lie{G}$, there is some $n\in\NN$ for which the set 
$$
  \underbrace{
    (\Lie{H}_1 \Lie{H}_2 \cdots \Lie{H}_r \Lie{H}_r \cdots \Lie{H}_2  \Lie{H}_1) 
    (\Lie{H}_1 \Lie{H}_2 \cdots \Lie{H}_r \Lie{H}_r \cdots \Lie{H}_2  \Lie{H}_1) 
    \cdots
    (\Lie{H}_1 \Lie{H}_2 \cdots \Lie{H}_r \Lie{H}_r \cdots \Lie{H}_2  \Lie{H}_1) 
  }_\text{$n$ copies}
$$
has positive measure in $\Lie{G}$.  By employing the formula \eqref{eq:adjoint} for the adjoint of $K_j$, the previous argument shows that $(K_1\cdots K_r K_r^* \cdots K_1^*)^n \in \Cs(\scr{H})$.  But now standard $C^*$-algebra theory implies that $K_1\cdots K_r$ is in $\Cs(\scr{H})$ (see \cite[I.5.3]{Davidson}).

\end{proof}



\section{Locally homogeneous structures}
\label{sec:local}

We now pass to manifolds which are locally modelled on Lie groups.  We continue with the above notation: $\lie{g}$ is the Lie algebra of a connected Lie group $\Lie{G}$; $\lie{h}_1,\ldots,\lie{h}_r$ is a family of Lie subalgebras and $\lie{h}$ is the Lie algebra they generate;  $\scr{H}_1,\ldots,\scr{H}_r$ and $\scr{H}$ are the corresponding left-coset foliations.

Recall (Definition \ref{def:local_homogeneity}) that a family of foliations $\scr{F}_1, \ldots \scr{F}_r$ of a manifold $\scr{X}$ is called {\em locally homogeneous} if $\scr{X}$ admits an atlas of local diffeomorphisms from $\Lie{G}$, under which the fibrations map to $\scr{H}_1,\ldots,\scr{H}_r$.  As described in the introduction, such a family $\scr{F}_1, \ldots, \scr{F}_r$ generates a foliation $\scr{F}\subseteq T\scr{X}$, which in each chart is $\scr{H}$.

Theorem \ref{thm:main_theorem} is then an immediate consequence of Theorem \ref{thm:compact_product} via a partition of unity argument.

\bigskip

We now apply this to the key example of generalized flag varieties.

\begin{example}
\label{ex:flag_variety}

Let $\Lie{G}$ be a complex semisimple Lie group of rank $r$.  Fix a Cartan subalgebra $\lie{h}$.  Fix a system of positive roots $\Roots^+$, with simple roots $\Simpleroots$.   Let $\lie{n} \defeq \bigoplus_{\alpha\in\Roots^+}\lie{g}_{\alpha}$, $\overline{\lie{n}} \defeq \bigoplus_{\alpha\in\Roots^+}\lie{g}_{-\alpha}$, and let $\overline{\lie{b}}\defeq\lie{h}\oplus\overline{\lie{n}}$ be the `lower' Borel subalgebra.  The corresponding Lie groups will be given by upper case letters.

Let $\scr{X}\defeq\Lie{G}/\overline{\Lie{B}}$ be the flag variety of $\Lie{G}$.  For any $S \subseteq \Simpleroots$, let $\ip{S}$ denote the set of positive roots spanned by $S$, {\em ie},
$$
  \ip{S} \defeq \{ \alpha\in\Roots^+ \st \alpha = \sum_{\beta\in S} n_\beta \beta \text{ for some } n_\beta \in \NN \}. 
$$
Let $\lie{n}_S \defeq \bigoplus_{\alpha\in\ip{S}} \lie{g}_{\alpha}$.  Let $\overline{\lie{p}}_S$ denote the parabolic subalgebra $\overline{\lie{b}}\oplus \lie{n}_S$. 
Let $\scr{Y}_S \defeq \Lie{G}  / \overline{\Lie{P}}_S$ be the corresponding partial flag variety.  Let $\scr{F}_S$ denote the fibration of $\scr{X}$ by fibres of the quotient map $\fibration_S:\scr{X} \to \scr{Y}_S$.

The map $\chart:\Lie{N} \into \Lie{G} \twoheadrightarrow \Lie{G}/\overline{\Lie{B}} = \scr{X}$ is a diffeomorphism onto its range.  It is clearly $\Lie{N}$-equivariant.  With this as a chart, the fibres of $\fibration_S$ pull back to the left cosets of $\Lie{N}\cap \overline{\Lie{P}}_S = \Lie{N}_S$. 
By taking $\Lie{G}$-translates, we can cover $\scr{X}$ with such charts.  Thus we see that the family of fibrations $(\scr{F}_S)_{S\subseteq\Simpleroots}$ is locally homogeneous with structural data $(\lie{n}_S)_ {S\subseteq\Simpleroots} \leq \lie{n}$.

Theorem \ref{thm:main_theorem} yields the following statement.  Let $S_1,\ldots,S_r \subseteq \Simpleroots$ and put $S = \bigcup_{i=1}^r S_i$.  Then
$$
  \Cs(\scr{F}_{S_1}) \cdots  \Cs(\scr{F}_{S_r}) \subseteq \Cs(\scr{F}_S).
$$
In particular, if $A_\alpha$ is a longitudinal pseudodifferential operator along $\scr{F}_{\{\alpha\}}$, for each $\alpha\in\Simpleroots$, then their product is a compact operator.  This is the generalization of \cite[Theorem 1.3]{Yuncken:PsiDOs_on_SLnC} for arbitrary flag manifolds.

\end{example}


\section{Essential orthogonality of sub-types}
\label{sec:essential_orthotypicality}

We conclude with a brief application of the above results in noncommutative harmonic analysis for compact groups.

Let $\Lie{K}$ be a compact Lie group, and let $\Lie{K}_1$ and $\Lie{K}_2$ be closed subgroups.  Let $U:\Lie{K} \to \scr{B}(H)$ be a unitary representation of $\Lie{K}$ with finite multiplicities.  What can be said of the relative position of the $\Lie{K}_1$- and $\Lie{K}_2$-invariant subspaces of $H$, or more generally of the isotypical subspaces?  This question arises naturally in harmonic analysis on flag varieties (see \cite{Yuncken:PsiDOs_on_SLnC}).

Let us introduce some notation.  If $\pi$ is an irreducible representation of a subgroup $\Lie{K}'$ of $\Lie{K}$, $p_\pi$ will denote the orthogonal projection onto $H_\pi$, the $\pi$-isotypical subspace of $U|_{\Lie{K}'}$.   If $S\subseteq \hat{\Lie{K}}'$ is a set of $\Lie{K}'$-types, we put $p_S \defeq \sum_{\pi\in S} p_\pi$.  Note that if $\sigma\in\hat{\Lie{K}}$ and $\pi\in\hat{\Lie{K}}'$, then $p_\sigma$ and $p_\pi$ commute.

We define the {\em inner product} of subspaces $H_1,H_2\leq H$ by
$$
  \ip{H_1,H_2} \defeq \sup \{ \ip{\xi_1,\xi_2} \st \xi_j\in H_j, \|\xi_j\| \leq 1 \}.
$$

Suppose to begin with that $\Lie{K} = \Lie{K}_1\times\Lie{K}_2$.  If the representation $H$ has finite $\Lie{K}$-multiplicities, the isotypical subspaces $H_{\pi_1}$ and $H_{\pi_2}$ will have finite dimensional intersection---namely, $H_{\pi_1\otimes\pi_2}$---and moreover will be `perpendicular' in the sense that their respective orthocomplements $H_{\pi_1}\cap (H_{\pi_2})^\perp$ and $H_{\pi_2}\cap (H_{\pi_1})^\perp$ are orthogonal.

Clearly this is not true in generality.  For instance, let $\Lie{K}=\SU(3)$ and $\Lie{K}_1$ and $\Lie{K}_2$ be the subgroups obtained by embedding $\SU(2)$ in the upper-left and lower-right corners of $\Lie{K}$, respectively.  There are infinitely many irreducible $\SU(3)$-representations which contain both a nonzero $\Lie{K}_1$-fixed vector and a nonzero $\Lie{K}_2$-fixed vector, and these fixed vectors are not in general orthogonal.  However, they are asymptotically orthogonal, in the sense that for any $\epsilon>0$, there are only finitely many $\Lie{K}$-types in which the $\Lie{K}_1$- and $\Lie{K}_2$-fixed subspaces have inner product greater than $\epsilon$.

With this example in mind we make the following definition.  We use $V^\sigma$ to denote the vector space underlying an irreducible representation $\sigma\in\hat{\Lie{K}}$.

\begin{definition}
\label{def:essentially_orthotypical}
We say the subgroups $\Lie{K}_1$ and $\Lie{K}_2$ of $\Lie{K}$ are {\em essentially orthotypical} if, for any $\pi_1\in\hat{\Lie{K}}_1$, $\pi_2\in\hat{\Lie{K}}_2$ and $\epsilon>0$, there are only finitely many $\Lie{K}$-types $\sigma\in\hat{\Lie{K}}$ for which $\ip{(V^\sigma)_{\pi_1}, (V^\sigma)_{\pi_2} } \geq \epsilon$.
\end{definition}

An equivalent definition is given by the following lemma.

\begin{lemma}
\label{lem:equiv_defns}
Let $\Lie{K}_1$ and $\Lie{K}_2$ be closed subgroups of a compact Lie group $\Lie{K}$.  The following are equivalent:
\begin{enumerate}
\item $\Lie{K}_1$ and $\Lie{K}_2$ are essentially orthotypical.
\item For any $\pi_1\in\hat{\Lie{K}}_1$ and $\pi_2\in\hat{\Lie{K}}_2$, $p_{\pi_1} p_{\pi_2}$ is a compact operator on every unitary $\Lie{K}$-representation with finite multiplicities.
\end{enumerate}
\end{lemma}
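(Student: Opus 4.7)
The plan is to reduce both conditions to a block-by-block statement using the $\Lie{K}$-isotypical decomposition. Given any unitary $\Lie{K}$-representation $H$ with finite multiplicities, I would first write $H = \bigoplus_{\sigma \in \hat{\Lie{K}}} H^\sigma$ with $H^\sigma \cong V^\sigma \otimes W^\sigma$ for finite-dimensional multiplicity spaces $W^\sigma$. Since each $p_{\pi_j}$ commutes with every $p_\sigma$, the product $p_{\pi_1} p_{\pi_2}$ preserves each $H^\sigma$. Because the $\Lie{K}_j$-action on $H^\sigma$ factors through the first tensorand, $p_{\pi_j}|_{H^\sigma}$ has the form $p^\sigma_{\pi_j} \otimes \Id_{W^\sigma}$, where $p^\sigma_{\pi_j}$ denotes the projection of $V^\sigma$ onto its $\pi_j$-isotypical subspace. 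Hence $(p_{\pi_1} p_{\pi_2})|_{H^\sigma}$ is a finite-rank operator whose norm equals $\| p^\sigma_{\pi_1} p^\sigma_{\pi_2}\|$ whenever $W^\sigma \neq 0$.

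Next I would identify $\| p^\sigma_{\pi_1} p^\sigma_{\pi_2}\|$ with the inner product in Definition \ref{def:essentially_orthotypical}. This is the standard identity that, for two orthogonal projections $p,q$,
\[
  \|pq\| = \sup\{|\ip{\xi, \eta}| : \xi \in \mathrm{im}(p),\, \eta \in \mathrm{im}(q),\, \|\xi\| = \|\eta\| = 1 \},
\]
which gives $\| p^\sigma_{\pi_1} p^\sigma_{\pi_2}\| = \ip{(V^\sigma)_{\pi_1}, (V^\sigma)_{\pi_2}}$ (interpreted as the absolute-value supremum, which one may achieve as a real nonnegative number after adjusting a phase).

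With these identifications in hand the equivalence is nearly formal: an orthogonal direct sum of finite-rank operators is compact if and only if the summand norms tend to zero outside any finite set. So condition (2) for a given $H$ is the statement that, for every $\epsilon > 0$, only finitely many $\sigma$ appearing in $H$ satisfy $\ip{(V^\sigma)_{\pi_1}, (V^\sigma)_{\pi_2}} \geq \epsilon$. The direction (1) $\Rightarrow$ (2) is now immediate for arbitrary $H$. For the reverse direction I would apply (2) to $H = L^2(\Lie{K})$, which by Peter--Weyl realizes every $\sigma \in \hat{\Lie{K}}$ with nonzero, finite multiplicity $\dim V^\sigma$, and therefore detects the condition on all of $\hat{\Lie{K}}$ simultaneously.

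There is no genuine obstacle here; the argument is a bookkeeping exercise once the tensor-factored description of $p_{\pi_j}|_{H^\sigma}$ and the projection-norm identity are in place. The one point requiring a moment's care is that $\|(p_{\pi_1} p_{\pi_2})|_{H^\sigma}\|$ depends only on $\sigma$ and not on the multiplicity space $W^\sigma$, which is simply the observation that $\|A \otimes \Id\| = \|A\|$.
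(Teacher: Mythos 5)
Your proof is correct and follows essentially the same route as the paper's: reduce to the $\Lie{K}$-isotypical blocks (on which $p_{\pi_1}p_{\pi_2}$ acts diagonally), identify the per-block norm with $\ip{(V^\sigma)_{\pi_1},(V^\sigma)_{\pi_2}}$, and read off the equivalence from the standard compactness criterion for block-diagonal operators, testing against a representation containing every $\Lie{K}$-type. Your explicit use of the identity $\|pq\|=\ip{\mathrm{im}\,p,\mathrm{im}\,q}$ and of $\|A\otimes\Id\|=\|A\|$ makes precise a step the paper leaves implicit, but the underlying argument is the same.
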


\begin{proof}
\textbf{(i) $\Rightarrow$ (ii):}
Let $S$ be the set of $\sigma\in\hat{\Lie{K}}$ for which $\ip{(V^\sigma)_{\pi_1},(V^\sigma)_{\pi_2}} \geq \epsilon$.  Then on $V^\sigma$ for any $\sigma\notin S$, $\|p_{\pi_1}p_{\pi_2}\|<\epsilon$.  Therefore
$$
  p_{\pi_1}p_{\pi_2} = p_S p_{\pi_1}p_{\pi_2} + p_S^\perp p_{\pi_1}p_{\pi_2}
$$
with the right-hand side being the sum of a finite rank operator and an operator of norm at most $\epsilon$.

\textbf{(ii) $\Rightarrow$ (i):}  Let $\epsilon>0$.  Fix any enumeration $\{\sigma_1, \sigma_2, \ldots\}$ of $\hat{\Lie{K}}$, and let $S_n \defeq \{\sigma_1,\ldots,\sigma_n\}$.  Put $H \defeq \bigoplus_{n=1}^\infty V^{\sigma_n}$.  The projections $p_{S_n}$ on $H$ converge strongly to $1$ as $n\to\infty$, so by the compactness of $p_{\pi_1} p_{\pi_2}$ we have $\|(p_{\pi_1} p_{\pi_2}) p_{S_n}^\perp \| < \epsilon$ for sufficiently large $n$.  

Now let $\sigma$ be any $\Lie{K}$-type not in $S_n$ and suppose $\xi_j \in (V^\sigma)_{\pi_j}$ for $j=1,2$, with $\|\xi_j\|\leq 1$.  After including $V^\sigma$ into $H$, we have
$$ 
 |\ip{\xi_1,\xi_2}| 
    = |\ip{p_{S_n}^\perp p_{\pi_1} \xi_1, p_{S_n}^\perp p_{\pi_2} \xi_2}| 
    = |\ip{ \xi_1, p_{S_n}^\perp p_{\pi_1} p_{\pi_2} \xi_2}| 
    < \epsilon.
$$
Thus $\ip{(V^\sigma)_{\pi_1}, (V^\sigma)_{\pi_2}} <\epsilon$ for all $\sigma\notin S_n$.

\end{proof}

\begin{remark}
\label{rmk:regular representation}
The representation $H=\bigoplus_{n=1}^\infty V^{\sigma_n}$ used in the proof of \textbf{(ii)} $\Rightarrow$ \textbf{(i)} could be replaced by any representation which contains every $\Lie{K}$-type---for instance, the regular representation.
\end{remark}

\begin{theorem}
\label{thm:essential_orthotypicality}

Let $\Lie{K}_1$, $\Lie{K}_2$ be closed subgroups of a compact Lie group $\Lie{K}$.  If $\Lie{K}_1$, $\Lie{K}_2$ generate $\Lie{K}$ then they are essentially orthotypical.  
\end{theorem}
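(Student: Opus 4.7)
The strategy is to realize the isotypical projections $p_{\pi_1}, p_{\pi_2}$ as longitudinally smoothing operators on $\Lie{K}$ and then invoke Theorem \ref{thm:compact_product}.

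By Lemma \ref{lem:equiv_defns} and Remark \ref{rmk:regular representation}, it is enough to show that $p_{\pi_1} p_{\pi_2}$ is a compact operator on the left regular representation $L^2(\Lie{K})$. Peter--Weyl gives the explicit formula
$$
p_{\pi_j} f(x) = \dim(\pi_j) \int_{\Lie{K}_j} \overline{\chi_{\pi_j}(h)}\, f(h^{-1}x) \, dh,
$$
which is precisely of the form \eqref{eq:Op_k} with kernel $k_j(x,h) = \dim(\pi_j)\overline{\chi_{\pi_j}(h)}$. This kernel is smooth and automatically compactly supported because $\Lie{K}$ and $\Lie{K}_j$ are compact, so $p_{\pi_j} \in \PsiDOc{-\infty}(\scr{H}_j) \subseteq \Cs(\scr{H}_j)$, where $\scr{H}_j$ is the foliation of $\Lie{K}$ by left cosets of $\Lie{K}_j$.

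Theorem \ref{thm:compact_product}, applied with $\Lie{G} = \Lie{K}$ and $\Lie{H}_j = \Lie{K}_j$, then yields
$$
p_{\pi_1} p_{\pi_2} \in \Cs(\scr{H}_1)\cdot\Cs(\scr{H}_2) \subseteq \Cs(\scr{H}),
$$
where $\Lie{H}$ is the Lie subgroup generated by $\Lie{K}_1$ and $\Lie{K}_2$. The hypothesis that $\Lie{K}_1,\Lie{K}_2$ generate $\Lie{K}$ means $\Lie{H} = \Lie{K}$, so $\scr{H}$ is the trivial foliation with a single leaf and $\Cs(\scr{H}) = \scr{K}(L^2 \Lie{K})$ (as noted immediately before Corollary \ref{cor:main_corollary}). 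Hence $p_{\pi_1} p_{\pi_2}$ is compact, and Lemma \ref{lem:equiv_defns} delivers essential orthotypicality.

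The one step requiring some care, which I expect to be the main obstacle, is that Section \ref{sec:groups} is developed for \emph{connected} Lie subgroups, whereas $\Lie{K}_1$ and $\Lie{K}_2$ are merely closed. Each $\Lie{K}_j$ has finitely many connected components, and splitting the Peter--Weyl integral over these components expresses $p_{\pi_j}$ as a finite sum of operators of the form \eqref{eq:Op_a}, where $\phi$ is a left translate of the inclusion $(\Lie{K}_j)_0 \hookrightarrow \Lie{K}$. One then either reruns the proof of Theorem \ref{thm:compact_product} for such finite sums---Corollary \ref{cor:analytic_case} still applies because the parametrizing map remains real-analytic and $\Lie{K}_1, \Lie{K}_2$ together still generate $\Lie{K}$---or one separates off the finitely many coset representatives as bounded left-translation operators, which are multipliers of each $\Cs(\scr{H}_j)$ and hence do not disturb the final conclusion.
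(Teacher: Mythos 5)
Your proof takes the same approach as the paper's: reduce via Lemma \ref{lem:equiv_defns} and Remark \ref{rmk:regular representation} to showing that $p_{\pi_1}p_{\pi_2}$ is compact on the left regular representation, observe that each $p_{\pi_j}$ is a longitudinal smoothing operator along the coset fibration of $\Lie{K}_j$, and invoke the results of Section \ref{sec:groups}. The only differences are cosmetic or improvements: you restore the normalizing factor $\dim(\pi_j)$ in the Peter--Weyl projection formula that the paper omits, and---more substantively---you flag the genuine subtlety that Section \ref{sec:groups} is developed for \emph{connected} Lie subgroups whereas $\Lie{K}_1,\Lie{K}_2$ are merely closed.

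That subtlety is real and the paper does gloss over it. Your second remedy is the cleaner one but deserves a sentence of care: decomposing $p_{\pi_j}$ over the components of $\Lie{K}_j$ as $\sum_a L_{g_a} K_a$ with $K_a\in\PsiDOc{-\infty}(\scr{H}_j)$ (where $\scr{H}_j$ is now the foliation by cosets of $(\Lie{K}_j)_0$) requires knowing that $L_{g_a}$ is a multiplier of $\Cs(\scr{H}_j)$, which holds because $g_a\in\Lie{K}_j$ normalizes the identity component $(\Lie{K}_j)_0$ and hence $L_{g_a}$ preserves the foliation. Beyond that, one must still check that the subsequent appeal to Corollary \ref{cor:analytic_case} (inside the proof of Theorem \ref{thm:compact_product}) continues to give full compactness: the hypothesis that $\Lie{K}_1,\Lie{K}_2$ generate $\Lie{K}$ guarantees by a Baire-category argument that some word in them has positive measure, but the parametrizing manifold $M$ is now disconnected, so Corollary \ref{cor:analytic_case} has to be applied component by component, after which the iteration/adjoint trick in the proof of Theorem \ref{thm:compact_product} still closes the argument. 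Your first remedy (rerunning the Section \ref{sec:groups} proof for disconnected $M$) is essentially this observation. In short, you correctly identified and correctly sketched the repair of a gap the paper left implicit; the rest of your argument agrees with the paper's.
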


\begin{proof}
Fix $\pi_1\in\hat{\Lie{K}}_1$, $\pi_2\in\hat{\Lie{K}}_2$ and $\epsilon>0$.  Let $\chi_{\pi_j} \in C^\infty(\Lie{K}_j)$ denote the character of $\pi_j$.
Let $U$ be the left-regular representation of $\Lie{K}$ on $L^2(\Lie{K})$.  By the orthogonality of characters,
$$
  p_{\pi_j} f(x) = \int_{\Lie{K}_j} \overline{\chi_{\pi_j}(x)} f(k^{-1}x) \, dx
$$
for any $f\in L^2(\Lie{K})$.  That is, $p_{\pi_j}$ is a longitudinally smoothing operator for the coset fibration of $\Lie{K}_j\leq \Lie{K}$.  Thus, using Remark \ref{rmk:regular_representation}, Lemma \ref {lem:equiv_defns} gives the result.

\end{proof}

It is natural to ask about the converse of Theorem \ref{thm:essential_orthotypicality}.  We will see that the converse holds at least if $\Lie{K}$ is compact semisimple.  

In fact, if the subgroup generated by $\Lie{K}_1$ and $\Lie{K}_2$ is not dense in $\Lie{K}$ then $\Lie{K}_1$ and $\Lie{K}_2$ are not essentially orthotypical.  For consider the the representation of $\Lie{K}$ on $L^2(\Lie{K}/\Lie{K'})$, where $\Lie{K}'$ is the closed subgroup generated by $\Lie{K}_1$ and $\Lie{K}_2$.  The Peter-Weyl Theorem gives the decomposition
$$
  L^2(\Lie{K}/{\Lie{K'}}) \cong \bigoplus_{\sigma\in \hat{\Lie{K}}} V^\sigma \otimes ({V^\sigma}^*)_{\pi_0},
$$
where $\pi_0$ is the trivial representation of ${\Lie{K'}}$.  (On the summands of the right hand side, the $\Lie{K}$-representation is by $\sigma\otimes I$.)
Thus $L^2 (\Lie{K}/{\Lie{K'}})$ has finite $\Lie{K}$-multiplicities.  It also has an infinite dimensional $\Lie{K}'$-fixed subspace, since $V^\sigma$ contains a ${\Lie{K'}}$-fixed vector if and only if ${V^\sigma}^*$ does.  The projections onto the trivial $\Lie{K}_j$-types (for $j=1,2$) both act as the identity on this subspace, so their product cannot be compact.

Compact semisimple groups have no connected dense subgroups \linebreak(\cite{Macias-Virgos}), which gives the converse to Theorem \ref{thm:essential_orthotypicality} for $\Lie{K}$ semisimple.


\bibliographystyle{alpha}
\bibliography{microlocal}

\begin{thebibliography}{Yun11}

\bibitem[AS71]{Atiyah-Singer4}
M.~F. Atiyah and I.~M. Singer.
\newblock The index of elliptic operators. {IV}.
\newblock {\em Ann. of Math. (2)}, 93:119--138, 1971.

\bibitem[Con82]{Connes:survey}
A.~Connes.
\newblock A survey of foliations and operator algebras.
\newblock In {\em Operator algebras and applications, {P}art {I} ({K}ingston,
  {O}nt., 1980)}, volume~38 of {\em Proc. Sympos. Pure Math.}, pages 521--628.
  Amer. Math. Soc., Providence, R.I., 1982.

\bibitem[Con94]{Connes:NCG}
Alain Connes.
\newblock {\em Noncommutative geometry}.
\newblock Academic Press Inc., San Diego, CA, 1994.

\bibitem[{\v{C}}SS01]{CSS}
Andreas {\v{C}}ap, Jan Slov{\'a}k, and Vladim{\'{\i}}r Sou{\v{c}}ek.
\newblock Bernstein-{G}elfand-{G}elfand sequences.
\newblock {\em Ann. of Math. (2)}, 154(1):97--113, 2001.

\bibitem[Dav96]{Davidson}
Kenneth~R. Davidson.
\newblock {\em {$C^*$}-algebras by example}, volume~6 of {\em Fields Institute
  Monographs}.
\newblock American Mathematical Society, Providence, RI, 1996.

\bibitem[JK95]{Julg-Kasparov}
P.~Julg and G.~Kasparov.
\newblock Operator {$K$}-theory for the group {${\rm SU}(n,1)$}.
\newblock {\em J. Reine Angew. Math.}, 463:99--152, 1995.

\bibitem[Jul02]{Julg}
Pierre Julg.
\newblock La conjecture de {B}aum-{C}onnes \`a coefficients pour le groupe
  {${\rm Sp}(n,1)$}.
\newblock {\em C. R. Math. Acad. Sci. Paris}, 334(7):533--538, 2002.

\bibitem[Kas84]{Kasparov:Lorentz}
G.~G. Kasparov.
\newblock Lorentz groups: {$K$}-theory of unitary representations and crossed
  products.
\newblock {\em Dokl. Akad. Nauk SSSR}, 275(3):541--545, 1984.

\bibitem[Laf02]{Lafforgue:Banach_KK}
V.~Lafforgue.
\newblock {$K$}-th\'{e}orie bivariante pour les alg\`{e}bres de {B}anach et
  conjecture de {B}aum-{C}onnes.
\newblock {\em Invent. Math.}, 149:1--95, 2002.

\bibitem[MV93]{Macias-Virgos}
Enrique Macias~Virg{\'o}s.
\newblock Nonclosed {L}ie subgroups of {L}ie groups.
\newblock {\em Ann. Global Anal. Geom.}, 11(1):35--40, 1993.

\bibitem[Roe87]{Roe:foliations}
John Roe.
\newblock Finite propagation speed and {C}onnes' foliation algebra.
\newblock {\em Math. Proc. Cambridge Philos. Soc.}, 102(3):459--466, 1987.

\bibitem[Tay84]{Taylor:microlocal}
Michael~E. Taylor.
\newblock Noncommutative microlocal analysis. {I}.
\newblock {\em Mem. Amer. Math. Soc.}, 52(313):iv+182, 1984.
\newblock Preprint of 2ed online at
  http://www.math.unc.edu/Faculty/met/ncmlms.pdf.

\bibitem[Yun10]{Yuncken:PsiDOs_on_SLnC}
Robert Yuncken.
\newblock Products of longitudinal pseudodifferential operators on flag
  varieties.
\newblock {\em J. Funct. Anal.}, 258(4):1140--1166, 2010.

\bibitem[Yun11]{Yuncken:BGG}
Robert Yuncken.
\newblock The {B}ernstein-{G}elfand-{G}elfand complex and {K}asparov {T}heory:
  the case of $\mathrm{SL}(3,\mathbb{C})$.
\newblock {\em Adv. Math.}, 226:1474--1512, 2011.

\end{thebibliography}

\end{document}